\newcommand{\subj}[1]{\par\noindent{\bf AMS Subject Classifications: }#1.}
\newcommand{\keyw}[1]{\par\noindent{\bf Keywords: }#1.}
\numberwithin{equation}{section}
\numberwithin{figure}{section}
\newtheorem{theorem}{Theorem}[section]
\newtheorem{corollary}[theorem]{Corollary}
\theoremstyle{definition}
\newtheorem{definition}[theorem]{Definition}
\newtheorem{example}[theorem]{Example}
\theoremstyle{remark}
\newtheorem{remark}[theorem]{Remark}
\date{}
\newcommand{\ijde}
{\vspace{-1in}\normalsize\flushleft
This is a preprint of a paper whose final and definite form will appear in\\
International Journal of Difference Equations, ISSN 0973-6069\\
{\tt http://campus.mst.edu/ijde}\\\vspace{1mm}\hrule\vspace{5mm}
\renewcommand\thefootnote{{}}


\footnotetext{\noindent\tt Received Nov 27, 2013; Revised Jan 24, 2014; Accepted Jan 31, 2014\par
\hspace*{8pt}Communicated by Ewa Schmeidel}}
\begin{document}

\title{\ijde\center\Large\bf
An Inverse Problem of the Calculus of Variations\\ on Arbitrary Time Scales}

\author{{\bf Monika Dryl}\\
Center for Research and Development in Mathematics and Applications\\
Department of Mathematics, University of Aveiro\\
3810--193 Aveiro, Portugal\\
{\tt monikadryl@ua.pt}\\[0.3cm]
{\bf Agnieszka B. Malinowska}\\
Faculty of Computer Science\\
Bialystok University of Technology\\
15--351 Bia\l ystok, Poland\\
{\tt a.malinowska@pb.edu.pl}\\[0.3cm]
{\bf Delfim F. M. Torres}\\
Center for Research and Development in Mathematics and Applications\\
Department of Mathematics, University of Aveiro\\
3810--193 Aveiro, Portugal\\
{\tt delfim@ua.pt}}

\maketitle


\thispagestyle{empty}

\begin{abstract}
We consider an inverse extremal problem for variational functionals
on arbitrary time scales. Using the Euler--Lagrange equation and the strengthened
Legendre condition, we derive a general form for a variational functional
that attains a local minimum at a given point of the vector space.
\end{abstract}

\subj{34N05, 49N45}

\keyw{calculus of variations, Hilger's time-scale calculus, inverse problems}

\bibliographystyle{plain}


\section{Introduction}

We study an inverse problem associated
with the following fundamental problem
of the calculus of variations: to minimize
\begin{equation}
\label{funct 1}
\mathcal{L}(y)=\int\limits_{a}^{b}
L\left(t,y^{\sigma}(t),y^{\Delta}(t)\right)\Delta t
\end{equation}
subject to the boundary conditions $y(a)=y_{0}(a)$, $y(b)=y_{0}(b)$,
on a given time scale $\mathbb{T}$. More precisely, we describe a general form
of a variational functional \eqref{funct 1} having an extremum at a given function $y_0$
under the Euler--Lagrange and strengthened Legendre conditions
on time scales \cite{BohnerCOVOTS}. Throughout the paper
we assume the reader to be familiar with the basic definitions and results
from the time scale theory \cite{BohnerDEOTS,MBbook2001,Hilger97}.
For a review on general approaches to the calculus of variations on time scales
see \cite{BohnerCOVOTS,MyID:252,china-Xuzhou,GirejkoMalinowska,Girejko,Malinowska,Martins,MyID:212}.
For analogous results in $\mathbb{T} = \mathbb{R}$ see \cite{orlov,orlov2}.
The results here obtained are new even for simple (but important)
time scales like $\mathbb{T} = \mathbb{Z}$ or $\mathbb{T} = q^{\mathbb{N}_{0}}$, $q > 1$.

The paper is organized as follows. In Section~\ref{sec:2} we collect some necessary
definitions and results of the delta calculus on time scales, which are used throughout
the text. The main results are presented in Section~\ref{sec:3}. We find a general
form of the variational functional \eqref{funct 1} that solves the inverse extremal problem
(Theorem~\ref{theorem general}). In order to illustrate our results, we present
the form of the Lagrangian $L$ on an isolated time scale (Corollary~\ref{cor1}).
We end by presenting the form of the Lagrangian $L$ in the periodic time scale
$\mathbb{T}=h \mathbb{Z}$, $h > 0$ (Example~\ref{cor hZ})
and in the $q$-scale $\mathbb{T}=q^{\mathbb{N}_{0}}$, $q>1$ (Example~\ref{ex1}).


\section{Preliminaries}
\label{sec:2}

In this section we introduce basic definitions and theorems that will be useful in the sequel.
 A time scale $\mathbb{T}$ is an arbitrary nonempty closed subset of $\mathbb{R}$.
Let $a,b\in\mathbb{T}$ with $a<b$. We define the interval $[a,b]$ in $\mathbb{T}$ by
$[a,b]_{\mathbb{T}}:=[a,b]\cap\mathbb{T}=\left\{t\in\mathbb{T}: a\leq t\leq b\right\}$.

\begin{definition}[See \cite{BohnerDEOTS}]
\label{def:jump:op}
The forward jump operator $\sigma:\mathbb{T} \rightarrow \mathbb{T}$ is defined by
$\sigma(t):=\inf\lbrace s\in\mathbb{T}: s>t\rbrace$ for $t\neq \sup\mathbb{T}$
and $\sigma(\sup\mathbb{T}) := \sup\mathbb{T}$ if $\sup\mathbb{T}< +\infty$.
The backward jump operator $\rho:\mathbb{T} \rightarrow \mathbb{T}$ is
given by $\rho(t):=\sup\{s \in \mathbb{T}:s<t\}$ for $t\neq\inf \mathbb{T}$
and $\rho(\inf\mathbb{T})=\inf\mathbb{T}$ if $\inf\mathbb{T}>-\infty$.
The graininess function $\mu:\mathbb{T} \rightarrow [0,\infty)$
is defined by $\mu(t):=\sigma(t)-t$.
\end{definition}

A point $t\in\mathbb{T}$ is called \emph{right-dense},
\emph{right-scattered}, \emph{left-dense} or \emph{left-scattered}
if $\sigma(t)=t$, $\sigma(t)>t$, $\rho(t)=t$,
$\rho(t)<t$, respectively. We say that $t$ is \emph{isolated}
if $\rho(t)<t<\sigma(t)$, that $t$ is \emph{dense} if $\rho(t)=t=\sigma(t)$.

\begin{example}
The two classical time scales are $\mathbb{R}$ and $\mathbb{Z}$,
representing the continuous and the purely discrete time, respectively.
The other standard examples are $h\mathbb{Z}$, $h>0$, and $q^{\mathbb{N}_{0}}$, $q > 1$.
It follows from Definition~\ref{def:jump:op} that
if $\mathbb{T}=\mathbb{R}$, then
$\sigma (t)=t$ and $\mu(t) = 0$ for all $t \in \mathbb{T}$;
if $\mathbb{T}=h\mathbb{Z}$, then $\sigma(t)= t+h$
and $\mu(t) = h$ for all $t\in\mathbb{T}$; if $\mathbb{T}=q^{\mathbb{N}_{0}}$,
then $\sigma(t)=qt$ and $\mu(t)=t(q-1)$ for all $t\in\mathbb{T}$.
\end{example}

\begin{definition}[See \cite{MR2028477}]
A time scale $\mathbb{T}$ is said to be an isolated time scale provided given any
$t \in \mathbb{T}$, there is a $\delta > 0$ such that
$(t - \delta, t+\delta) \cap \mathbb{T} = \{t\}$.
\end{definition}

\begin{remark}
If the graininess function is bounded from below
by a strictly positive number, then the time scale is isolated \cite{MR2679122}.
Therefore, $h\mathbb{Z}$, $h > 0$, and $q^{\mathbb{N}_{0}}$, $q > 1$,
are examples of isolated time scales.
Note that the converse is not true.
For example, $\mathbb{T} = \log(\mathbb{N})$
is an isolated time scale but its graininess function
is not bounded from below by a strictly positive number.
\end{remark}

To simplify the notation, one usually uses $f^{\sigma}(t):=f(\sigma(t))$.
The delta derivative is defined for points from the set
$$
\mathbb{T}^{\kappa} :=
\begin{cases}
\mathbb{T}\setminus\left\{\sup\mathbb{T}\right\}
& \text{ if } \rho(\sup\mathbb{T})<\sup\mathbb{T}<\infty,\\
\mathbb{T}
& \hbox{ otherwise}.
\end{cases}
$$

\begin{definition}[See \cite{BohnerDEOTS}]
A function $f:\mathbb{T}\rightarrow\mathbb{R}$ is
\emph{$\Delta$-differentiable} at
$t\in\mathbb{T}^\kappa$ if there is a number $f^{\Delta}(t)$
such that for all $\varepsilon>0$ there exists a neighborhood $O$
of $t$ such that
$$
|f^\sigma(t)-f(s)-f^{\Delta}(t)(\sigma(t)-s)|
\leq\varepsilon|\sigma(t)-s|
\quad \mbox{ for all $s\in O$}.
$$
We call $f^{\Delta}(t)$ the \emph{$\Delta$-derivative} of $f$ at $t$.
\end{definition}

\begin{example}
If $\mathbb{T}=h\mathbb{Z}$, then
$f:\mathbb{T} \rightarrow \mathbb{R}$ is delta differentiable
at $t\in \mathbb{T}$ if, and only if,
\begin{equation*}
f^{\Delta}(t)=\frac{f(\sigma(t))-f(t)}{\mu(t)}=\frac{f(t+h)-f(t)}{h}=:\Delta_{h} f(t).
\end{equation*}
In the particular case $h=1$, $f^{\Delta}(t)=\Delta f(t)$,
where $\Delta$ is the usual forward difference operator.
If $\mathbb{T}=q^{\mathbb{N}_{0}}=\lbrace q^{k}:q>1, k\in\mathbb{N}_{0}\rbrace$,
then $f^{\Delta}(t)=\frac{f(qt)-f(t)}{(q-1)t}=:\Delta_{q}f(t)$, i.e.,
we get the usual Jackson derivative of quantum calculus \cite{QC}.
\end{example}

\begin{theorem}[See \cite{BohnerDEOTS}]
\label{differentiation}
Let $f:\mathbb{T} \rightarrow \mathbb{R}$
and $t\in\mathbb{T}^{\kappa}$.
If $f$ is delta differentiable at $t$, then
$$
f^\sigma(t)=f(t)+\mu(t)f^{\Delta}(t).
$$
\end{theorem}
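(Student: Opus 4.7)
The plan is to apply the definition of $\Delta$-differentiability directly, with the clever choice $s = t$ (which lies in every neighborhood $O$ of $t$), and then distinguish two cases according to whether $t$ is right-dense or right-scattered.

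First I would note that if $t$ is right-dense, then $\sigma(t) = t$, so $\mu(t) = 0$ and $f^{\sigma}(t) = f(t)$. In this case the claimed identity $f^{\sigma}(t) = f(t) + \mu(t) f^{\Delta}(t)$ reduces to $f(t) = f(t)$ and holds trivially, without even using the differentiability hypothesis.

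Next I would handle the right-scattered case $\sigma(t) > t$, where $\mu(t) > 0$. Given any $\varepsilon > 0$, the differentiability assumption supplies a neighborhood $O$ of $t$ such that
\[
|f^{\sigma}(t) - f(s) - f^{\Delta}(t)(\sigma(t) - s)| \leq \varepsilon |\sigma(t) - s|
\]
for every $s \in O$. Specializing to $s = t \in O$ yields
\[
|f^{\sigma}(t) - f(t) - f^{\Delta}(t)\mu(t)| \leq \varepsilon\, \mu(t).
\]
Since this estimate holds for every $\varepsilon > 0$ while $\mu(t)$ is a fixed positive number, letting $\varepsilon \to 0^{+}$ forces the left-hand side to vanish, giving $f^{\sigma}(t) = f(t) + \mu(t) f^{\Delta}(t)$.

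There is essentially no obstacle here beyond recognizing the correct test point $s = t$ in the definition; once that choice is made, the result follows from an $\varepsilon$-squeeze in the right-scattered case and is automatic in the right-dense case. The only minor care required is to treat the two cases separately, because in the right-dense case $\mu(t) = 0$ would make the inequality above degenerate (it would only say $0 \leq 0$), so the trivial observation $f^{\sigma}(t) = f(t)$ must carry the argument there.
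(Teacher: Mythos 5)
Your proof is correct and is exactly the standard argument for this result (the paper cites it to Bohner and Peterson rather than proving it): specialize the definition of the $\Delta$-derivative to $s=t$, note the right-dense case is trivial since $\mu(t)=0$, and in the right-scattered case conclude from $|f^{\sigma}(t)-f(t)-f^{\Delta}(t)\mu(t)|\leq\varepsilon\mu(t)$ for every $\varepsilon>0$ that the left-hand side is zero. No gaps.
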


\begin{definition}[See \cite{BohnerDEOTS}]
A function $f:\mathbb{T} \rightarrow \mathbb{R}$ is called rd-continuous provided
it is continuous at right-dense points in $\mathbb{T}$ and its left-sided limits exists
(finite) at all left-dense points in $\mathbb{T}$.
\end{definition}

The set of all rd-continuous functions $f:\mathbb{T} \rightarrow \mathbb{R}$
is denoted by $C_{rd} = C_{rd}(\mathbb{T}) = C_{rd}(\mathbb{T},\mathbb{R})$.
The set of functions $f:\mathbb{T} \rightarrow \mathbb{R}$ that are
$\Delta$-differentiable and whose derivative is rd-continuous is denoted by
$C^{1}_{rd}=C_{rd}^{1}(\mathbb{T})=C^{1}_{rd}(\mathbb{T},\mathbb{R})$.

A function $F:\mathbb{T}\rightarrow\mathbb{R}$ is called an antiderivative
of $f:\mathbb{T}\rightarrow\mathbb{R}$ provided that
$F^{\Delta}(t)=f(t)$ for all $t\in\mathbb{T}^{\kappa}$.
Let $\mathbb{T}$ be a time scale and $a,b\in\mathbb{T}$.
If $F$ is an antiderivative of $f$, then the Cauchy
$\Delta$-integral is defined by
$$
\int\limits_{a}^{b}f(t)\Delta t:=F(b)-F(a).
$$

\begin{theorem}[See \cite{BohnerDEOTS}]
Every rd-continuous function has an antiderivative.
In particular, if $t_{0}\in\mathbb{T}$, then $F$ defined by
\begin{equation*}
F(t):=\int\limits_{t_{0}}^{t} f(\tau)\Delta \tau,
\end{equation*}
$t\in\mathbb{T}$, is an antiderivative of $f$.
\end{theorem}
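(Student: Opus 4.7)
The plan is to split the statement into two independent pieces: first, establish that every rd-continuous $f$ admits \emph{some} antiderivative $G$ on $\mathbb{T}$, and then verify that the Cauchy integral formula produces an antiderivative with the prescribed value at $t_0$.

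For the existence part I would proceed constructively, treating the two local types of points in $\mathbb{T}^{\kappa}$ separately. At any right-scattered $t$, Theorem~\ref{differentiation} forces $G(\sigma(t))=G(t)+\mu(t)f(t)$, so once $G(t)$ is chosen, its value across the jump is determined. The subtle part is to define $G$ through clusters of right-dense points, where no such recursion is available. Here I would imitate the classical real-variable argument: rd-continuity on a compact interval $[a,b]_{\mathbb{T}}$ allows one to approximate $f$ uniformly by simple functions taking only finitely many values on $[a,b]_{\mathbb{T}}$, and for such simple $f$ an antiderivative can be written down explicitly by pasting the jump rule at right-scattered points with affine pieces on maximal runs of right-dense points. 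Passing to a uniform limit of these approximate antiderivatives yields a continuous $G$; the $\Delta$-differentiability with $G^{\Delta}=f$ is then checked at right-scattered points directly from the construction and at right-dense points by an $\varepsilon$-$\delta$ argument exploiting the uniform approximation together with the continuity of $f$ from the right.

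Once one antiderivative $G$ is in hand, I would read the Cauchy integral as $\int_a^b f(\tau)\Delta\tau:=G(b)-G(a)$ and observe this is independent of the chosen $G$, since any two antiderivatives differ by a $\Delta$-constant (their difference has zero $\Delta$-derivative, so the right-scattered recursion forces it to be constant on forward orbits and rd-continuity of the difference extends this to all of $\mathbb{T}$). Then $F(t):=\int_{t_0}^{t} f(\tau)\Delta\tau=G(t)-G(t_0)$ inherits $F^{\Delta}(t)=G^{\Delta}(t)=f(t)$ for every $t\in\mathbb{T}^{\kappa}$, which is exactly the ``in particular'' assertion.

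The principal obstacle is the construction across right-dense accumulation points, where the topological structure of $\mathbb{T}$ can be intricate: right-dense points may accumulate on right-scattered ones and conversely. Making the uniform-approximation argument rigorous requires careful compactness control, and it is precisely here that the rd-continuity hypothesis (as opposed to mere pointwise continuity on the right-dense part) is needed, since it guarantees both right continuity at right-dense points and existence of finite left-hand limits at left-dense points — enough to control the approximation uniformly on $[a,b]_{\mathbb{T}}$.
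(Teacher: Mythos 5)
This theorem is quoted from Bohner and Peterson's monograph (\cite{BohnerDEOTS}) as a preliminary; the paper itself contains no proof of it, so there is nothing internal to compare your argument against. In the cited source the result is obtained by a different route from yours: one first proves, via the induction principle on time scales, that every regulated function possesses a \emph{pre-antiderivative}, and then specializes to rd-continuous integrands, for which the pre-antiderivative is genuinely $\Delta$-differentiable everywhere on $\mathbb{T}^{\kappa}$. Your plan --- uniform approximation by step functions on the compact interval, explicit antiderivatives for step functions, passage to the uniform limit --- is the classical ``regulated functions'' strategy transplanted to time scales, and it can be made to work; note in passing that for a step function equal to a constant $c$ on a subinterval the antiderivative is simply $G(\alpha)+c(t-\alpha)$ there, which already encodes the jump rule at every right-scattered point of that subinterval, so your separate ``pasting'' at right-scattered points versus ``maximal runs of right-dense points'' is unnecessary (and the latter notion is not even well defined on a Cantor-like time scale).

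Two genuine gaps remain. First, the step you yourself flag as the principal obstacle is exactly the one that is not carried out: to conclude $G^{\Delta}=f$ at right-dense points from the uniform convergence $G_{n}\to G$ you need the quantitative estimate $|G_{n}(s)-G_{m}(s)-G_{n}(t)+G_{m}(t)|\leq \|f_{n}-f_{m}\|_{\infty}\,|s-t|$, i.e., a mean value inequality for the approximants; without it the three-term $\varepsilon$--$\delta$ splitting does not close. Acknowledging the difficulty is not the same as resolving it. Second, your uniqueness-up-to-a-constant argument (``constant on forward orbits, then extend by rd-continuity'') fails on any time scale with a dense part: if $t$ is right-dense its forward orbit under $\sigma$ is the singleton $\{t\}$, so the recursion propagates no information at all, and rd-continuity of the difference (which is in fact continuous, being $\Delta$-differentiable) cannot bridge an interval of dense points. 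The correct tool here is again the mean value theorem on time scales, which gives directly that a function with vanishing $\Delta$-derivative on $[a,b]^{\kappa}_{\mathbb{T}}$ is constant. Both gaps are repaired by the same missing lemma, so the architecture of your proof is sound, but as written it does not yet constitute a proof.
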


\begin{example}
If $\mathbb{T}=h\mathbb{Z}$, $h>0$,
and $a, b \in \mathbb{T}$ with $a<b$, then
\begin{equation*}
\int\limits_{a}^{b}f(t)\Delta t
=\sum\limits_{k=\frac{a}{h}}^{\frac{b}{h}-1}f(kh)h.
\end{equation*}
If $\mathbb{T}=q^{\mathbb{N}_{0}}$, $q>1$, then
$\displaystyle \int\limits_{a}^{b}f(t)\Delta t=(q-1)\sum_{t\in[a,b)\cap\mathbb{T}}tf(t)$.
\end{example}

Let $\mathbb{T}$ be a given time scale with at least three points.
Consider the following variational problem on the time scale $\mathbb{T}$:
\begin{equation}
\label{variational problem}
\mathcal{L}(y)=\int\limits_{a}^{b}
L\left(t, y^{\sigma}(t),y^{\Delta}(t)\right)\Delta t \longrightarrow \min,
\quad y(a)=\alpha, \quad y(b)=\beta,
\end{equation}
where $a,b\in\mathbb{T}$ with $a<b$; $\alpha, \beta\in\mathbb{R}^{n}$
with $n\in\mathbb{N}$, and $L:\mathbb{T}\times\mathbb{R}^{2n}\rightarrow\mathbb{R}$.

\begin{definition}
We say that $y\in C_{rd}^{1}(\mathbb{T})$ is admissible for problem
\eqref{variational problem} if it satisfies
the boundary conditions $y(a)=\alpha$ and $y(b)=\beta$.
\end{definition}

\begin{definition}
An admissible function $\hat{y}$ is called a \emph{local minimizer} of
problem \eqref{variational problem} provided there exists $\delta >0$ such that
$\mathcal{L}(\hat{y})\leq \mathcal{L}(y)$ for all admissible $y$ with
$\|y-\hat{y}\|_{C^{1}_{rd}}<\delta$, where
$$
\|f\|_{C^{1}_{rd}}
= \sup_{t\in[a,b]^{k}_{\mathbb{T}}}\|f^{\sigma}(t)\|
+\sup_{t\in[a,b]^{k}_{\mathbb{T}}}\|f^{\triangle}(t)\|
$$
with $\|\cdot\|$ a norm in $\mathbb{R}^n$.
\end{definition}

In what follows the Lagrangian $L$ is understood as a function
$(t,x,v) \rightarrow L(t,x,v)$ and by $L_x$ and $L_v$ we denote
the partial derivatives of $L$ with respect to $x$ and $v$, respectively.
Similar notation is used for second order partial derivatives.

\begin{theorem}[The Euler--Lagrange equation \cite{HilscgerZeidan}]
\label{E-L theorem}
Assume that $L(t,\cdot,\cdot)$ is differentiable in $(x,v)$
and $L(t,\cdot,\cdot)$, $L_{x}(t,\cdot,\cdot)$, $L_{v}(t,\cdot,\cdot)$
are continuous at $(y^{\sigma},y^{\Delta})$, uniformly in $t$
and rd-continuous in $t$ for any admissible $y$. If $\hat{y}(t)$
is a local minimizer of the variational problem \eqref{variational problem},
then there exists a vector $c\in\mathbb{R}^{n}$ such that the Euler--Lagrange equation
\begin{equation}
\label{E-L equation}
L_{v}\left(t, \hat{y}^{\sigma}(t),\hat{y}^{\Delta}(t)\right)
=\int\limits_{a}^{t}L_{x}(\tau, \hat{y}^{\sigma}(\tau),\hat{y}^{\Delta}(\tau))\Delta\tau
+c^{T}
\end{equation}
holds for $t\in[a,b]^{\kappa}_{\mathbb{T}}$.
\end{theorem}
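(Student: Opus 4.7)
The plan is to derive the Euler--Lagrange equation from the standard first-variation argument, adapted to the delta-calculus on an arbitrary time scale, and then convert the resulting weak equation into the integral form stated in the theorem via integration by parts and a Dubois--Reymond-type lemma.

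First, I take an admissible variation: let $\eta \in C^{1}_{rd}([a,b]_{\mathbb{T}}, \mathbb{R}^{n})$ with $\eta(a) = \eta(b) = 0$, so that $\hat{y} + \varepsilon \eta$ is admissible for all sufficiently small $\varepsilon$. Define $\phi(\varepsilon) = \mathcal{L}(\hat{y} + \varepsilon \eta)$. Since $\hat{y}$ is a local minimizer, $\phi$ has a local minimum at $\varepsilon = 0$, so $\phi'(0) = 0$. The continuity and differentiability hypotheses on $L$, $L_x$, $L_v$ (uniform in $t$, rd-continuous in $t$) justify differentiation under the integral sign, giving
\begin{equation*}
\phi'(0) = \int_{a}^{b} \Bigl[ L_{x}\bigl(t,\hat{y}^{\sigma}(t),\hat{y}^{\Delta}(t)\bigr)\, \eta^{\sigma}(t) + L_{v}\bigl(t,\hat{y}^{\sigma}(t),\hat{y}^{\Delta}(t)\bigr)\, \eta^{\Delta}(t) \Bigr] \Delta t = 0.
\end{equation*}

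Next, I introduce the antiderivative $A(t) := \int_{a}^{t} L_{x}\bigl(\tau,\hat{y}^{\sigma}(\tau),\hat{y}^{\Delta}(\tau)\bigr) \Delta \tau$, which satisfies $A^{\Delta}(t) = L_{x}(t, \hat{y}^{\sigma}(t), \hat{y}^{\Delta}(t))$ and $A(a)=0$. Applying the time-scale integration by parts formula $\int_{a}^{b} f^{\Delta} g^{\sigma} \Delta t = [fg]_{a}^{b} - \int_{a}^{b} f g^{\Delta} \Delta t$ with $f = A$ and $g = \eta$, and using $\eta(a) = \eta(b) = 0$, I rewrite the first term so that the first-variation identity becomes
\begin{equation*}
\int_{a}^{b} \Bigl[ L_{v}\bigl(t,\hat{y}^{\sigma}(t),\hat{y}^{\Delta}(t)\bigr) - A(t) \Bigr]\, \eta^{\Delta}(t)\, \Delta t = 0
\end{equation*}
for every admissible $\eta$.

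Finally, I invoke the Dubois--Reymond lemma on time scales: if $g \in C_{rd}([a,b]_{\mathbb{T}}^{\kappa}, \mathbb{R}^{n})$ satisfies $\int_{a}^{b} g(t)\, \eta^{\Delta}(t)\, \Delta t = 0$ for all $\eta \in C^{1}_{rd}$ with $\eta(a)=\eta(b)=0$, then $g$ is constant on $[a,b]_{\mathbb{T}}^{\kappa}$. Applying this with $g(t) = L_{v}(t,\hat{y}^{\sigma}(t),\hat{y}^{\Delta}(t)) - A(t)$ yields a constant vector $c \in \mathbb{R}^{n}$ such that $L_{v}(t,\hat{y}^{\sigma}(t),\hat{y}^{\Delta}(t)) = A(t) + c^{T}$, which is exactly \eqref{E-L equation}.

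The main obstacle is the Dubois--Reymond step: unlike the classical setting, one cannot use a bump-function construction directly because $C^{1}_{rd}$ variations must be compatible with the scattered points of $\mathbb{T}$. One way around this is to pick $\eta(t) := \int_{a}^{t} \bigl[ g(\tau) - \bar{g} \bigr] \Delta \tau$, where $\bar{g}$ is the average of $g$ chosen so that $\eta(b) = 0$; substituting this test function into the vanishing identity forces $\int_{a}^{b} |g - \bar{g}|^{2} \Delta t = 0$, hence $g \equiv \bar{g}$. Verifying that this choice yields an admissible $\eta$ on a general time scale, and that differentiation under the integral sign is legitimate under only the stated regularity, are the technical points requiring the most care.
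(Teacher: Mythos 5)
The paper does not prove this theorem: it is quoted as a known preliminary result from Hilscher and Zeidan (reference \cite{HilscgerZeidan}), so there is no in-paper proof to compare against. Your argument is the standard first-variation route used in that literature (and in Bohner's 2004 paper), and it is essentially sound: the first variation, the integration by parts via the antiderivative $A(t)=\int_a^t L_x\,\Delta\tau$ using $(fg)^{\Delta}=f^{\Delta}g^{\sigma}+fg^{\Delta}$, and the Dubois--Reymond lemma are exactly the right ingredients, and your worry about bump functions is correctly resolved by the explicit test function $\eta(t)=\int_a^t\bigl[g(\tau)-\bar g\bigr]\Delta\tau$, which is automatically in $C^1_{rd}$ because every rd-continuous function has an antiderivative.

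Two small steps deserve to be made explicit. First, substituting your $\eta$ into $\int_a^b g\cdot\eta^{\Delta}\,\Delta t=0$ gives $\int_a^b g\cdot(g-\bar g)\,\Delta t=0$, not yet $\int_a^b\|g-\bar g\|^2\,\Delta t=0$; you need the additional observation that $\int_a^b\bar g\cdot(g-\bar g)\,\Delta t=\bar g\cdot(\eta(b)-\eta(a))=0$ by the choice of $\bar g$, and then subtract. Second, the final implication ``$\int_a^b\|g-\bar g\|^2\,\Delta t=0$ with the integrand nonnegative and rd-continuous implies $g\equiv\bar g$'' requires a short case split on a general time scale: at a right-scattered point $t$ the integral dominates $\mu(t)\|g(t)-\bar g\|^2$, forcing $g(t)=\bar g$ there, while at right-dense points one argues by continuity as in the classical case; this yields the conclusion on all of $[a,b]^{\kappa}_{\mathbb{T}}$, matching the domain claimed in the statement. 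With these two points filled in, your proof is a complete and faithful reconstruction of the cited result.
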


\begin{theorem}[The Legendre condition \cite{BohnerCOVOTS}]
\label{Legendre theorem}
If $\hat{y}$ is a local minimizer of the variational problem
\eqref{variational problem}, then
\begin{equation}
\label{Legendre cond}
A(t)+\mu(t)\left\lbrace C(t)+C^{T}(t)+\mu(t)B(t)
+(\mu(\sigma(t)))^{\dag}A(\sigma(t))\right\rbrace\geq 0,
\end{equation}
$t\in[a,b]^{\kappa^{2}}_{\mathbb{T}}$, where
$A(t)=L_{vv}\left(t,\hat{y}^{\sigma}(t),\hat{y}^{\Delta}(t)\right)$,
$B(t)=L_{xx}\left(t,\hat{y}^{\sigma}(t),\hat{y}^{\Delta}(t)\right)$,
$C(t)=L_{xv}\left(t,\hat{y}^{\sigma}(t),\hat{y}^{\Delta}(t)\right)$,
and where $\alpha^{\dag}=\frac{1}{\alpha}$
if $\alpha\in\mathbb{R}\setminus\lbrace 0 \rbrace$ and $0^{\dag}=0$.
\end{theorem}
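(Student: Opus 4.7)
The plan is to derive the inequality from the non-negativity of the second variation of $\mathcal{L}$ at $\hat{y}$, applied to a suitably localized test function. Assuming $L$ is of class $C^2$ in $(x,v)$ near $(\hat{y}^\sigma,\hat{y}^\Delta)$ (so that $A$, $B$, $C$ are defined), then for every admissible $\eta\in C^{1}_{rd}$ with $\eta(a)=\eta(b)=0$ and every sufficiently small $\varepsilon\in\mathbb{R}$ the perturbation $\hat{y}+\varepsilon\eta$ is admissible. Taylor-expanding the inequality $\mathcal{L}(\hat{y}+\varepsilon\eta)-\mathcal{L}(\hat{y})\geq 0$ to order $\varepsilon^{2}$ (the order-$\varepsilon$ term vanishes by the Euler--Lagrange equation \eqref{E-L equation}) yields
\begin{equation*}
\int_{a}^{b}\bigl\{(\eta^{\sigma})^{T}B(t)\eta^{\sigma}+(\eta^{\sigma})^{T}C(t)\eta^{\Delta}+(\eta^{\Delta})^{T}C^{T}(t)\eta^{\sigma}+(\eta^{\Delta})^{T}A(t)\eta^{\Delta}\bigr\}\Delta t\geq 0.
\end{equation*}
The entire task reduces to extracting a pointwise matrix inequality at an arbitrary $t_{0}\in[a,b]^{\kappa^{2}}_{\mathbb{T}}$ from this integral condition.

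I would split on the type of $t_{0}$. If $t_{0}$ is right-dense, so $\mu(t_{0})=0$, the claim reduces to $A(t_{0})\geq 0$ and follows by the standard continuous-type argument using a sequence of bump variations supported on shrinking right-neighborhoods of $t_{0}$, scaled so that $\eta^{\sigma}\to 0$ while $\eta^{\Delta}$ keeps a fixed direction $\xi\in\mathbb{R}^{n}$ of unit norm. If $t_{0}$ is right-scattered, I would use the \emph{pulse} variation $\eta(\sigma(t_{0}))=\xi$ and $\eta=0$ at every other point of a small neighborhood of $t_{0}$, extended to a globally $C^{1}_{rd}$ admissible function on $[a,b]_{\mathbb{T}}$ without further contribution to the integral (which is possible because $t_{0}\in[a,b]^{\kappa^{2}}_{\mathbb{T}}$ keeps $\sigma(t_{0})$ strictly inside $(a,b)_{\mathbb{T}}$). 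The relevant values are then $\eta^{\sigma}(t_{0})=\xi$, $\eta^{\Delta}(t_{0})=\xi/\mu(t_{0})$, and $\eta^{\Delta}(\sigma(t_{0}))=-\xi/\mu(\sigma(t_{0}))$, the last being interpreted via $0^{\dag}=0$ when $\sigma(t_{0})$ is right-dense.

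Substituting the pulse into the second-variation integral, only the $\Delta$-integration weights $\mu(t_{0})$ at $t_{0}$ and $\mu(\sigma(t_{0}))$ at $\sigma(t_{0})$ yield nonzero contributions, and elementary algebra collects them into
\begin{equation*}
\tfrac{1}{\mu(t_{0})}\,\xi^{T}\Bigl\{A(t_{0})+\mu(t_{0})\bigl[C(t_{0})+C^{T}(t_{0})+\mu(t_{0})B(t_{0})+(\mu(\sigma(t_{0})))^{\dag}A(\sigma(t_{0}))\bigr]\Bigr\}\xi\geq 0.
\end{equation*}
Multiplying by $\mu(t_{0})>0$ and using that $\xi$ is an arbitrary element of $\mathbb{R}^{n}$ gives the matrix inequality at $t_{0}$, which is exactly \eqref{Legendre cond}. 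The main obstacle I expect is the construction of the pulse variation on a \emph{mixed} time scale: when $t_{0}$ is right-scattered but lies adjacent to dense portions of $\mathbb{T}$, one must smoothly extend the two-point pulse to a function in $C^{1}_{rd}([a,b]_{\mathbb{T}})$ with $\eta(a)=\eta(b)=0$ without introducing spurious non-local contributions to the second variation, and one must verify in each limiting case ($\sigma(t_{0})$ right-dense, or equal to a boundary point) that the term $(\mu(\sigma(t_{0})))^{\dag}A(\sigma(t_{0}))$ degenerates to zero consistently with the convention $0^{\dag}=0$.
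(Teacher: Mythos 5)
The paper does not prove this theorem at all: it is quoted as a known result from Bohner's 2004 paper \cite{BohnerCOVOTS}, so there is no internal proof to compare against. Your sketch is, in outline, the standard second-variation argument used in that reference: nonnegativity of the second variation at a local minimizer, followed by a variation concentrated on $\{t_{0},\sigma(t_{0})\}$, and your algebra for the case where both $t_{0}$ and $\sigma(t_{0})$ are right-scattered is correct --- the two contributions weighted by $\mu(t_{0})$ and $\mu(\sigma(t_{0}))$ do collect, after multiplying by $\mu(t_{0})>0$, into exactly \eqref{Legendre cond}. The one genuinely incomplete step is the one you flag yourself: when $\sigma(t_{0})$ is right-dense, the two-point pulse is not rd-continuous at $\sigma(t_{0})$ and hence is not an admissible variation, so $\eta^{\Delta}(\sigma(t_{0}))=-\xi/\mu(\sigma(t_{0}))$ cannot simply be ``interpreted via $0^{\dag}=0$''; the correct device (as in the cited source) is a family of hat functions rising from $0$ at $t_{0}$ to $\xi$ at $\sigma(t_{0})$ and decaying to $0$ over $[\sigma(t_{0}),t_{1}]_{\mathbb{T}}$, with the limit $t_{1}\downarrow\sigma(t_{0})$ showing that the descent contributes nothing --- this limit is what the convention $0^{\dag}=0$ encodes. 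The same shrinking-support argument handles the right-dense $t_{0}$ case, reducing \eqref{Legendre cond} to $A(t_{0})\geq 0$ as you say. With that construction supplied, your proof is complete and agrees with the source the paper cites.
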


\begin{remark}
If \eqref{Legendre cond} holds with the strict inequality $>$,
then it is called \emph{the strengthened Legendre condition}.
\end{remark}

\begin{definition}[See \cite{BohnerDEOTS}]
We say that a function $p:\mathbb{T}\rightarrow\mathbb{R}$
is regressive provided
$$
1+\mu(t)p(t)\neq 0
$$
holds for all $t\in\mathbb{T}^{\kappa}$. The set of all regressive
and rd-continuous functions $f:\mathbb{T}\rightarrow\mathbb{R}$
is denoted by $\mathcal{R}=\mathcal{R}(\mathbb{T})
=\mathcal{R}(\mathbb{T},\mathbb{R})$.
\end{definition}

\begin{theorem}[See \cite{MBbook2001}]
\label{thm:IVP}
Let $p\in\mathcal{R}$, $f\in C_{rd}$, $t_{0}\in\mathbb{T}$
and $y_{0}\in\mathbb{R}$. Then, the unique solution
of the initial value problem
\begin{equation}
\label{eq:IVP}
y^{\Delta}=p(t)y+f(t),\quad y(t_{0})=y_{0},
\end{equation}
is given by
\begin{equation*}
y(t)=e_{p}(t,t_{0})y_{0}+\int\limits_{t_{0}}^{t}
e_{p}(t,\sigma(\tau))f(\tau)\Delta\tau,
\end{equation*}
where $e_{p}(\cdot,\cdot)$ denotes the exponential function
on time scales.
\end{theorem}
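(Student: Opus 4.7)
The plan is to verify that the proposed formula satisfies both the initial condition and the dynamic equation, and then to establish uniqueness separately. Substituting $t = t_0$ trivially gives $y(t_0) = e_p(t_0, t_0)\, y_0 + 0 = y_0$, since by construction $e_p(t_0, t_0) = 1$.

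The nontrivial step is computing $y^\Delta(t)$, and the main obstacle is that in the integral
$\int_{t_0}^t e_p(t, \sigma(\tau)) f(\tau) \Delta\tau$
the variable $t$ appears both as an upper limit and inside the integrand, so one cannot apply the fundamental theorem of time-scale calculus directly. I would sidestep this by invoking the semigroup identity $e_p(t, \sigma(\tau)) = e_p(t, t_0)/e_p(\sigma(\tau), t_0)$, which is legitimate because $p \in \mathcal{R}$ forces $e_p$ to be pointwise nonzero, in order to factor out $e_p(t, t_0)$ and rewrite the candidate as
\[
y(t) = e_p(t, t_0)\, u(t), \qquad u(t) := y_0 + \int_{t_0}^{t} \frac{f(\tau)}{e_p(\sigma(\tau), t_0)}\, \Delta\tau .
\]
Since the integrand in $u$ is rd-continuous ($f \in C_{rd}$ and $e_p(\sigma(\cdot), t_0)$ is rd-continuous and nonvanishing), the existence theorem quoted in the preliminaries gives $u^\Delta(t) = f(t)/e_p(\sigma(t), t_0)$. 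Applying the product rule $(FG)^\Delta = F^\Delta G + F^\sigma G^\Delta$ with $F = e_p(\cdot, t_0)$ and $G = u$, and using the defining property $[e_p(\cdot, t_0)]^\Delta = p\, e_p(\cdot, t_0)$, one computes
\[
y^\Delta(t) = p(t)\, e_p(t, t_0)\, u(t) + e_p(\sigma(t), t_0)\cdot \frac{f(t)}{e_p(\sigma(t), t_0)} = p(t)\, y(t) + f(t),
\]
as required.

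For uniqueness, let $y_1, y_2$ be two solutions and set $z := y_1 - y_2$, which satisfies $z^\Delta = p(t) z$ with $z(t_0) = 0$. Writing $z = w\cdot e_p(\cdot, t_0)$ and running the same product-rule calculation in reverse yields $e_p(\sigma(t), t_0)\, w^\Delta(t) \equiv 0$; since regressivity keeps $e_p$ pointwise nonzero, $w^\Delta \equiv 0$, so $w$ is constant, equal to $w(t_0) = 0$, and therefore $z \equiv 0$. The crux of the whole argument is the semigroup identity: it converts the awkward $t$-dependence inside the integral into a clean product of a $t$-dependent factor and an ordinary antiderivative, after which the rest is the product rule and the fact, built into the definition of $\mathcal{R}$, that the exponential on time scales never vanishes.
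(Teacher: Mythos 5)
This theorem is imported by the paper from the cited reference (Bohner--Peterson) without any proof, so there is nothing in the paper to compare against; your variation-of-parameters argument is correct and is essentially the standard textbook proof of this result. The factorization $e_p(t,\sigma(\tau))=e_p(t,t_0)/e_p(\sigma(\tau),t_0)$, the product rule, and the uniqueness step all rest only on the semigroup/reciprocal identities and the nonvanishing of $e_p$ for regressive $p$, each of which is legitimate, and the integrand $f(\tau)/e_p(\sigma(\tau),t_0)$ is indeed rd-continuous so the antiderivative theorem quoted in the preliminaries applies.
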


\begin{remark}[See \cite{BohnerDEOTS}]
\label{rem3}
An alternative form of the solution of the initial value problem \eqref{eq:IVP} is given by
\begin{equation*}
y(t)=e_{p}(t,t_{0})\left[y_{0}+\int\limits_{t_{0}}^{t}e_{p}(t_{0},\sigma(\tau))f(\tau)\Delta\tau\right].
\end{equation*}
\end{remark}

For more properties of the delta exponential function we refer the reader
to \cite{BohnerDEOTS,MBbook2001}.


\section{Main Results}
\label{sec:3}

The problem under our consideration
is to find a general form
of the variational functional
\begin{equation}
\label{functional}
\mathcal{L}(y)=\int\limits_{a}^{b}
L\left(t,y^{\sigma}(t),y^{\Delta}(t)\right)\Delta t,
\end{equation}
$L:[a,b]_{\mathbb{T}}\times\mathbb{R}^{2}\rightarrow\mathbb{R}$,
subject to the boundary conditions $y(a)=y(b)=0$,
possessing a local minimum at zero, under the Euler--Lagrange
and the strengthened Legendre conditions.
We assume that $L(t,\cdot,\cdot)$ is a $C^{2}$-function with respect to $(x,v)$
uniformly in $t$, and $L$, $L_{x}$, $L_{v}$, $L_{vv}\in C_{rd}$
for any admissible path $y(\cdot)$.
Observe that under our assumptions, by Taylor's theorem, we may
write $L$, with the big $O$ notation, in the form
\begin{equation}
\label{pre:integrand}
L(t, x,v)=P(t, x) +Q(t, x) v +\frac{1}{2} R(t, x,0)v^{2} + O(v^3),
\end{equation}
where
\begin{equation}
\label{notation}
\begin{gathered}
P(t, x) = L(t, x,0),\\
Q(t, x) = L_{v}(t, x,0),\\
R(t, x,0) = L_{vv}(t, x,0).
\end{gathered}
\end{equation}
Let $R(t, x, v) = R(t, x,0) + O(v)$.
Then, one can write \eqref{pre:integrand} as
\begin{equation}
\label{integrand}
L(t, x,v)=P(t, x) +Q(t, x) v +\frac{1}{2} R(t, x, v) v^{2}.
\end{equation}
Now the idea is to find general forms of $P(t, y^{\sigma}(t))$,
$Q(t, y^{\sigma}(t))$ and $R(t, y^{\sigma}(t), y^{\Delta}(t))$ using the Euler--Lagrange
and the strengthened Legendre conditions. Note that the Euler--Lagrange equation \eqref{E-L equation}
at the null extremal, with notation
\eqref{notation}, is
\begin{equation}
\label{E-Leq}
Q(t,0)=\int\limits_{a}^{t}P_{x}(\tau,0)\Delta\tau + C,
\end{equation}
$t\in [a,b]^{\kappa}_{\mathbb{T}}$.
Therefore, choosing an arbitrary function $P(t,y^{\sigma}(t))$ such that
$P(t,\cdot)\in C^{2}$ with respect to the second variable, uniformly in $t$,
$P$ and $P_x$ are rd-continuous in $t$ for all admissible $y$,
and by \eqref{E-Leq} we can write a general form of $Q$:
\begin{equation}
\label{fullQ}
Q(t,y^{\sigma}(t))=C+\int\limits_{a}^{t}P_{x}(\tau,0)\Delta \tau
+q(t,y^{\sigma}(t))-q(t,0),
\end{equation}
where $C\in\mathbb{R}$ and $q$ is an arbitrary function such that $q(t,\cdot)\in C^{2}$
with respect to the second variable, uniformly in $t$,
and $q$ and $q_x$ are rd-continuous in $t$ for all admissible $y$.
With notation \eqref{notation}, the strengthened
Legendre condition \eqref{Legendre cond} at the null extremal has the form
\begin{equation}
\label{Legendre}
R(t,0,0)+\mu(t)\left\lbrace 2Q_{x}(t,0)+\mu(t)P_{xx}(t,0)
+\left(\mu^{\sigma}(t)\right)^{\dag}R(\sigma(t),0,0)\right\rbrace > 0,
\end{equation}
$t\in [a,b]^{\kappa^{2}}_{\mathbb{T}}$, where $\alpha^{\dag}=\frac{1}{\alpha}$
if $\alpha\in\mathbb{R}\setminus\lbrace 0 \rbrace$ and $0^{\dag}=0$.
Hence, we set
\begin{equation}
\label{eq3}
R(t,0,0)+\mu(t)\left\lbrace 2Q_{x}(t,0)+\mu(t)P_{xx}(t,0)
+\left(\mu^{\sigma}(t)\right)^{\dag}R(\sigma(t),0,0)\right\rbrace = p(t)
\end{equation}
with $p\in C_{rd}^{1}([a,b]_{\mathbb{T}})$, $p(t)>0$ for all $t\in [a,b]^{\kappa^{2}}_{\mathbb{T}}$, chosen arbitrary.
Note that there exists a unique solution of \eqref{eq3} with
respect to $R(t, 0, 0)$.
If $t$ is a right-dense point, then $\mu(t)=0$ and $R(t,0,0) = p(t)$.
Otherwise, $\mu(t) \ne 0$, and using Theorem~\ref{differentiation}
with $f(t) = R(t,0,0)$ we modify equation \eqref{eq3} into a first order
delta dynamic equation, which has a unique solution $R(t,0,0)$ in agreement
with Theorem~\ref{thm:IVP} (see details in the proof of Corollary~\ref{cor1}).
We derive a general form of $R$ from Legendre's condition \eqref{Legendre},
as a sum of the solution $R(t,0,0)$ of equation \eqref{eq3} and function $w$,
which is chosen arbitrarily in such a way that $w(t,\cdot,\cdot)\in C^2$
with respect to the second and the third variable,
uniformly in $t$; $w_x,w_v$ and $w_{vv}$ are rd-continuous
in $t$ for all admissible $y$.
Concluding: a general form of the integrand $L$ for functional \eqref{functional}
follows from \eqref{integrand}, \eqref{fullQ} and \eqref{eq3}, and is given by
\begin{equation}
\label{eq:genF:lag}
\begin{split}
L&\left(t,y^{\sigma}(t),y^{\Delta}(t)\right)
= P\left(t,y^{\sigma}(t)\right)\\
&+\left(C+\int\limits_{a}^{t}P_{x}(\tau,0)\Delta \tau
+q(t,y^{\sigma}(t))-q(t,0)\right)y^{\Delta}(t)\\
&+\Biggl(p(t)-\mu(t)\left\lbrace 2Q_{x}(t,0)+\mu(t) P_{xx}(t,0)
+\left(\mu^{\sigma}(t)\right)^{\dag}R(\sigma(t),0,0)\right\rbrace\\
&+w(t,y^{\sigma}(t),y^{\Delta}(t))
-w(t,0,0)\Biggr)\frac{y^{\Delta}(t)^{2}}{2}.
\end{split}
\end{equation}
We have just proved the following result.

\begin{theorem}
\label{theorem1}
Let $\mathbb{T}$ be an arbitrary time scale. If functional \eqref{functional}
with boundary conditions $y(a)=y(b)=0$ attains a local minimum at $\hat{y}(t)\equiv 0$
under the strengthened Legendre condition, then its Lagrangian $L$ takes the form \eqref{eq:genF:lag},
where $R(t,0,0)$ is a solution of equation \eqref{eq3}, $C\in\mathbb{R}$, $\alpha^{\dag}=\frac{1}{\alpha}$
if $\alpha\in\mathbb{R}\setminus\lbrace 0 \rbrace$ and $0^{\dag}=0$.
Functions $P$, $p$, $q$ and $w$ are arbitrary functions satisfying:
\begin{itemize}
\item[(i)]\
$P(t,\cdot),q(t,\cdot)\in C^{2}$ with respect to the second variable uniformly in $t$;
$P$, $P_x$, $q$, $q_x$ are rd-continuous in $t$ for all admissible $y$;
$P_{xx}(\cdot,0)$ is rd-continuous in $t$; $p\in C^{1}_{rd}$ with
$p(t)>0$ for all $t\in [a,b]^{\kappa^{2}}_{\mathbb{T}}$;
\item[(ii)]\
$w(t,\cdot,\cdot)\in C^2$ with respect to the second and the third variable, uniformly in $t$,
$w_x,w_v,w_{vv}$ are rd-continuous in $t$ for all admissible $y$.
\end{itemize}
\end{theorem}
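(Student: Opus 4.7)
The plan is to read \eqref{eq:genF:lag} as a second-order Taylor expansion of $L$ in the third variable $v=y^{\Delta}$, whose three coefficient families $P$, $Q$, $R$ are constrained only by the Euler--Lagrange equation and the strengthened Legendre condition evaluated at the null extremal $\hat{y}\equiv 0$. Any remaining degrees of freedom will then be packaged into the arbitrary data $P$, $p$, $q$, $w$ and the constant $C$.

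First, using the $C^2$-regularity of $L$ in $(x,v)$ uniformly in $t$, I will invoke Taylor's theorem in $v$ around $v=0$ to write
\begin{equation*}
L(t,x,v) = P(t,x) + Q(t,x)\,v + \tfrac{1}{2} R(t,x,v)\,v^{2},
\end{equation*}
with $P$, $Q$, $R(\cdot,\cdot,0)$ as in \eqref{notation} and the cubic remainder absorbed into the $v$-dependence of $R$. Substituting $\hat{y}\equiv 0$ (so $\hat{y}^{\sigma}\equiv 0$ and $\hat{y}^{\Delta}\equiv 0$) into the Euler--Lagrange equation \eqref{E-L equation} collapses it to \eqref{E-Leq}, which constrains $Q$ only along the slice $x=0$. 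To preserve the freedom in $x$, I will parametrize
\begin{equation*}
Q(t,y^{\sigma}(t)) = C + \int_{a}^{t} P_{x}(\tau,0)\,\Delta\tau + q(t,y^{\sigma}(t)) - q(t,0),
\end{equation*}
for arbitrary $q$ with the regularity in item~(i); this manifestly reduces to \eqref{E-Leq} at $y=0$ while leaving $P$ and $q$ otherwise free.

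Next, I will evaluate the strengthened Legendre condition of Theorem~\ref{Legendre theorem} at $\hat{y}\equiv 0$, where the matrices specialize to $A(t)=R(t,0,0)$, $B(t)=P_{xx}(t,0)$, $C(t)=Q_{x}(t,0)$, so that \eqref{Legendre cond} becomes exactly \eqref{Legendre}. Since this is a strict inequality with arbitrary positive slack, I will set the left-hand side equal to a freely chosen $p \in C^{1}_{rd}$ with $p>0$, yielding \eqref{eq3}, and solve for $R(\cdot,0,0)$. At right-dense points $\mu(t)=0$ forces $R(t,0,0)=p(t)$ directly; at right-scattered points I will substitute $R(\sigma(t),0,0) = R(t,0,0) + \mu(t)R^{\Delta}(t,0,0)$ via Theorem~\ref{differentiation}, converting \eqref{eq3} into a first-order linear $\Delta$-dynamic equation whose unique solution is supplied by Theorem~\ref{thm:IVP}. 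The unconstrained dependence of $R$ on $(x,v)$ will then be absorbed by writing $R(t,y^{\sigma},y^{\Delta}) = R(t,0,0) + w(t,y^{\sigma},y^{\Delta}) - w(t,0,0)$ for arbitrary $w$ as in item~(ii). Plugging the derived $Q$ and $R$ back into the Taylor form of $L$ produces \eqref{eq:genF:lag}.

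The main obstacle will be the solvability step for $R(\cdot,0,0)$: the right-dense and right-scattered cases must be handled uniformly, the coefficient arising after the substitution $R(\sigma(t),0,0) = R(t,0,0) + \mu(t)R^{\Delta}(t,0,0)$ must be checked to be regressive so that Theorem~\ref{thm:IVP} genuinely applies, and one must verify that the resulting $R$ still has enough regularity for the hypotheses of Theorems~\ref{E-L theorem} and~\ref{Legendre theorem} to hold on admissible paths. This is the one place in the argument that is not purely formal, and it is precisely the point the authors appear to defer to the explicit isolated-time-scale computation in Corollary~\ref{cor1}.
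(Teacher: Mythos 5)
Your proposal is correct and follows essentially the same route as the paper: Taylor expansion of $L$ in $v$ giving \eqref{integrand}, the Euler--Lagrange equation at the null extremal yielding \eqref{fullQ} with the arbitrary function $q$, the strengthened Legendre condition set equal to an arbitrary $p>0$ giving \eqref{eq3} for $R(t,0,0)$ with the $(x,v)$-freedom absorbed into $w$. The solvability and regressivity issues you flag at the end are exactly the points the paper also defers to the proof of Corollary~\ref{cor1}.
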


Now we consider the general situation when the variational problem consists in minimizing
\eqref{functional} subject to arbitrary boundary conditions $y(a)=y_{0}(a)$ and $y(b)=y_{0}(b)$,
for a certain given function $y_{0}\in C_{rd}^{2}([a,b]_{\mathbb{T}})$.

\begin{theorem}
\label{theorem general}
Let $\mathbb{T}$ be an arbitrary time scale. If the variational functional
\eqref{functional} with boundary conditions $y(a)=y_{0}(a)$, $y(b)=y_{0}(b)$,
attains a local minimum for a certain given function
$y_{0}(\cdot)\in C^{2}_{rd}([a,b]_\mathbb{T})$
under the strengthened Legendre condition, then its Lagrangian $L$ has the form
\begin{equation*}
\begin{split}
&L\left(t,y^{\sigma}(t),y^{\Delta}(t)\right)
= P\left(t,y^{\sigma}(t)-y^{\sigma}_{0}(t)\right)
+ \left(y^{\Delta}(t)-y_{0}^{\Delta}(t)\right)\\
&\times \left(C+\int\limits_{a}^{t}P_{x}\left(\tau,-y_{0}^{\sigma}(\tau)\right)\Delta \tau
+q\left(t,y^{\sigma}(t)-y^{\sigma}_{0}(t)\right)
-q\left(t,-y_{0}^{\sigma}(t)\right)\right)\\
&+\frac{1}{2}\Biggl(p(t)-\mu(t)\left\lbrace 2Q_{x}(t,0)+\mu(t) P_{xx}(t,0)
+\left(\mu^{\sigma}(t)\right)^{\dag}R(\sigma(t),0,0)\right\rbrace
\\
&+w(t,y^{\sigma}(t)-y^{\sigma}_{0}(t),y^{\Delta}(t)-y_{0}^{\Delta}(t))
-w\left(t,-y_{0}^{\sigma}(t),-y_{0}^{\Delta}(t)\right)\Biggr)
\left(y^{\Delta}(t)-y_{0}^{\Delta}(t)\right)^{2},
\end{split}
\end{equation*}
where $R(t,0,0)$ is the solution of equation \eqref{eq3}, $C\in\mathbb{R}$
and functions $P$, $p$, $q$, $w$ satisfy conditions (i) and (ii) of Theorem~\ref{theorem1}.
\end{theorem}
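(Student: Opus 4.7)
The plan is to reduce Theorem~\ref{theorem general} to the already established Theorem~\ref{theorem1} by the admissible change of variable $z(t) := y(t) - y_0(t)$. Since $y_0 \in C^{2}_{rd}([a,b]_\mathbb{T})$, this is a bijection between the class of admissible paths of the variational problem with boundary data $y(a) = y_0(a)$, $y(b) = y_0(b)$ and the class of $C^{1}_{rd}$ paths vanishing at $a$ and $b$, and it maps the putative minimizer $\hat y = y_0$ to the null path $\hat z \equiv 0$.

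To carry out this reduction I would first introduce the translated Lagrangian
\[
\tilde L(t, \zeta, \nu) := L\bigl(t, \zeta + y_0^{\sigma}(t), \nu + y_0^{\Delta}(t)\bigr)
\]
and the associated functional $\tilde{\mathcal L}(z) := \int_{a}^{b} \tilde L(t, z^{\sigma}, z^{\Delta})\,\Delta t$. Pointwise under the integral one has $L(t, y^{\sigma}, y^{\Delta}) = \tilde L(t, z^{\sigma}, z^{\Delta})$ along admissible paths, so $\mathcal L$ admits a local minimum at $y_0$ if and only if $\tilde{\mathcal L}$ admits a local minimum at $z \equiv 0$. A chain-rule check shows that the partial derivatives of $\tilde L$ with respect to $(\zeta, \nu)$ evaluated at $(0,0)$ coincide with the partial derivatives of $L$ with respect to $(x,v)$ evaluated at $(y_0^{\sigma}(t), y_0^{\Delta}(t))$, so the Euler--Lagrange equation \eqref{E-L equation} and the strengthened Legendre condition \eqref{Legendre cond} for $L$ at $y_0$ translate exactly into the corresponding conditions for $\tilde L$ at the null path. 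The $C^{2}$ and rd-continuity hypotheses on $L$ and $y_0$ are inherited by $\tilde L$, so it fits the hypotheses of Theorem~\ref{theorem1}.

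Theorem~\ref{theorem1} then gives the representation \eqref{eq:genF:lag} for $\tilde L(t, z^{\sigma}, z^{\Delta})$ in terms of arbitrary functions $\tilde P, \tilde q, \tilde p, \tilde w$ satisfying (i)--(ii), a constant $\tilde C \in \mathbb R$, and the solution $\tilde R(t, 0, 0)$ of \eqref{eq3}. Substituting back $\zeta = y^{\sigma} - y_0^{\sigma}$ and $\nu = y^{\Delta} - y_0^{\Delta}$ and renaming the arbitrary data produces the displayed formula in the theorem. The step that I expect to require the most care is precisely this back-substitution: one must track how the shift by $(y_0^{\sigma}, y_0^{\Delta})$ propagates through each occurrence of an arbitrary function, so that the arguments $-y_0^{\sigma}(\tau)$ appearing in the $P_{x}$ integrand, $-y_0^{\sigma}(t)$ in the subtracted $q$ term, and $(-y_0^{\sigma}(t), -y_0^{\Delta}(t))$ in the subtracted $w$ term arise correctly, and one must verify that the family of Lagrangians generated by all admissible $(P, q, p, w)$ in Theorem~\ref{theorem general} coincides with the family generated by all admissible $(\tilde P, \tilde q, \tilde p, \tilde w)$ through Theorem~\ref{theorem1}, so that no generality is gained or lost in the renaming.
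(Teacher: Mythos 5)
Your proposal is correct and follows essentially the same route as the paper: the authors likewise reduce to Theorem~\ref{theorem1} via the auxiliary functional $\tilde{\mathcal{L}}(y):=\mathcal{L}(y+y_{0})$ with Lagrangian $\tilde{L}\left(t,y^{\sigma}(t),y^{\Delta}(t)\right)=L\left(t,y^{\sigma}(t)+y_{0}^{\sigma}(t),y^{\Delta}(t)+y_{0}^{\Delta}(t)\right)$, which is exactly your translation $z=y-y_{0}$ viewed from the other direction. The extra care you flag about the back-substitution and the chain rule is sound but is left implicit in the paper's (very terse) proof.
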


\begin{proof}
The result follows as a corollary of Theorem~\ref{theorem1}.
In order to reduce the problem to the case of null boundary conditions
$y(a)=0$ and $y(b)=0$, we introduce the auxiliar variational functional
\begin{equation*}
\begin{split}
\tilde{\mathcal{L}}(y)&:= \mathcal{L}(y+y_0)=
\int\limits_{a}^{b}
L\left(t,y^{\sigma}(t)+y_{0}^{\sigma}(t),y^{\Delta}(t)+y_{0}^{\Delta}(t)\right)\Delta t\\
&=: \int\limits_{a}^{b} \tilde{L}\left(t,y^{\sigma}(t),y^{\Delta}(t)\right)\Delta t
\end{split}
\end{equation*}
subject to boundary conditions $y(a)=0$ and $y(b)=0$.
The result follows by application of Theorem~\ref{theorem1} to the auxiliar
Lagrangian $\tilde{L}$.
\end{proof}

For the classical situation $\mathbb{T}=\mathbb{R}$,
Theorem~\ref{theorem general} gives a recent result of \cite{orlov}.

\begin{corollary}[Theorem~4 of \cite{orlov}]
\label{cor R}
If the variational functional
$$
\mathcal{L}(y)=\int\limits_{a}^{b} L(t,y(t),y'(t))dt
$$
attains a local minimum at $y_{0}(\cdot)\in C^{2}[a,b]$
satisfying boundary conditions $y(a)=y_{0}(a)$ and $y(b)=y_{0}(b)$
and the classical Legendre condition $R(t,y_{0}(t),y'_{0}(t))>0$, $t\in[a,b]$,
then its Lagrangian $L$ has the form
\begin{multline*}
L(t,y(t),y^{'}(t))=P(t,y(t)-y_{0}(t))\\
+(y^{'}(t)-y_{0}^{'}(t))\left(C+\int\limits_{a}^{t}
P_{x}(\tau,-y_{0}(\tau))d\tau+q(t,y(t)-y_{0}(t))-q(t,-y_{0}(t))\right)\\
+\frac{1}{2}\left(p(t)+w(t,y(t)-y_{0}(t),y^{'}(t)-y_{0}^{'}(t))
-w(t,-y_{0}(t),-y_{0}^{'}(t))\right)(y^{'}(t)-y_{0}^{'}(t))^{2},
\end{multline*}
where $C\in\mathbb{R}$.
\end{corollary}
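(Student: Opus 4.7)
The plan is to deduce Corollary~\ref{cor R} simply as the specialization of Theorem~\ref{theorem general} to the continuous time scale $\mathbb{T}=\mathbb{R}$; once the time-scale operators are particularized, the formula displayed in Theorem~\ref{theorem general} collapses term by term to the one claimed here, so no additional analysis is needed beyond substitution.

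First, I would record the relevant simplifications. When $\mathbb{T}=\mathbb{R}$, every point is dense, so $\sigma(t)=t$ and $\mu(t)=0$ for all $t\in[a,b]$. Consequently $f^{\sigma}=f$ and $f^{\Delta}=f'$ for every differentiable $f$, and in particular $y^{\sigma}(t)=y(t)$, $y^{\Delta}(t)=y'(t)$, $y_{0}^{\sigma}(t)=y_{0}(t)$, $y_{0}^{\Delta}(t)=y_{0}'(t)$. The function class $C^{2}_{rd}([a,b]_{\mathbb{T}})$ coincides with $C^{2}[a,b]$, and the rd-continuity and $C^{1}_{rd}$ hypotheses (i)--(ii) of Theorem~\ref{theorem1} (inherited by Theorem~\ref{theorem general}) reduce to ordinary continuity and $C^{1}$ smoothness on $[a,b]$.

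Second, I would substitute these identities into the Lagrangian given in Theorem~\ref{theorem general}. The decisive simplification occurs in the quadratic coefficient: the whole bracket $\mu(t)\bigl\{2Q_{x}(t,0)+\mu(t)P_{xx}(t,0)+(\mu^{\sigma}(t))^{\dag}R(\sigma(t),0,0)\bigr\}$ is multiplied by $\mu(t)\equiv 0$ and therefore vanishes, so the coefficient reduces to $p(t)+w(t,y(t)-y_{0}(t),y'(t)-y_{0}'(t))-w(t,-y_{0}(t),-y_{0}'(t))$. Likewise, equation \eqref{eq3} degenerates to $R(t,0,0)=p(t)$, and the strengthened Legendre condition \eqref{Legendre} reduces to $R(t,0,0)>0$, which by the translation $y\mapsto y-y_{0}$ used to prove Theorem~\ref{theorem general} is exactly the classical condition $R(t,y_{0}(t),y_{0}'(t))>0$ stated in the hypothesis of Corollary~\ref{cor R}. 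Collecting the remaining three summands (the $P$-term, the linear $q$-term, and the quadratic $w$-term) gives verbatim the Lagrangian displayed in the corollary.

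There is essentially no obstacle here; the ``hard'' work has already been absorbed in Theorem~\ref{theorem general} via the shift argument and in the Section~\ref{sec:3} derivation from the Euler--Lagrange and Legendre conditions. The only point worth a brief verification is that the integral $\int_{a}^{t}P_{x}(\tau,-y_{0}(\tau))\,d\tau$ appearing in Corollary~\ref{cor R} is precisely what \eqref{fullQ} produces under the translation and with $\Delta\tau=d\tau$; this is immediate. Hence the corollary follows.
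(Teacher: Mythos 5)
Your proposal is correct and follows exactly the paper's route: the paper's own proof is the one-line observation that the corollary follows from Theorem~\ref{theorem general} with $\mathbb{T}=\mathbb{R}$, and your substitutions (in particular that $\mu(t)\equiv 0$ kills the bracketed term in the quadratic coefficient and reduces \eqref{eq3} to $R(t,0,0)=p(t)$) are precisely the details the paper leaves implicit.
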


\begin{proof}
Follows from Theorem~\ref{theorem general} with $\mathbb{T}=\mathbb{R}$.
\end{proof}

Theorem~\ref{theorem general} seems to be new for any time scale other than $\mathbb{T}=\mathbb{R}$.
In the particular case of an isolated time scale, where $\mu(t) \neq 0$ for all $t\in\mathbb{T}$,
we get the following corollary.

\begin{corollary}
\label{cor1}
Let $\mathbb{T}$ be an isolated time scale. If functional \eqref{functional}
subject to the boundary conditions $y(a)=y(b)=0$ attains a local minimum at
$\hat{y}(t) \equiv 0$ under the strengthened Legendre condition,
then the Lagrangian $L$ has the form
\begin{equation}
\begin{split}
\label{eq4}
&L\left(t,y^{\sigma}(t),y^{\Delta}(t)\right)
= P\left(t,y^{\sigma}(t)\right)\\
&+\left(C+\int\limits_{a}^{t}P_{x}(\tau,0)\Delta \tau
+q(t,y^{\sigma}(t))-q(t,0)\right)y^{\Delta}(t)\\
&+\left(e_{r}(t,a)R_{0}
+\int\limits_{a}^{t}e_{r}(t,\sigma(\tau))s(\tau)\Delta\tau
+w(t,y^{\sigma}(t),y^{\Delta}(t))
-w(t,0,0)\right)\frac{y^{\Delta}(t)^{2}}{2},
\end{split}
\end{equation}
where $C,R_{0}\in\mathbb{R}$ and $r(t)$ and $s(t)$ are given by
\begin{equation}
\label{funct s, r}
r(t) := -\frac{1+\mu(t)(\mu^{\sigma}(t))^{\dag}}{\mu^{2}(t)(\mu^{\sigma}(t))^{\dag}},
\quad s(t) := \frac{p(t)
-\mu(t)[2Q_{x}(t,0)+\mu(t)P_{xx}(t,0)]}{\mu^{2}(t)(\mu^{\sigma}(t))^{\dag}},
\end{equation}
with $\alpha\in\mathbb{R}\setminus\lbrace 0 \rbrace$ and $0^{\dag}=0$,
where functions $P$, $p$, $q$, $w$ satisfy assumptions of Theorem~\ref{theorem1}.
\end{corollary}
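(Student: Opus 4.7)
The plan is to derive Corollary~\ref{cor1} as a direct specialization of Theorem~\ref{theorem1} to isolated time scales. Since $\mu(t) \neq 0$ everywhere on such a time scale, we have $(\mu^{\sigma}(t))^{\dag} = 1/\mu(\sigma(t))$, and the implicit relation \eqref{eq3} that characterizes $R(t,0,0)$ can be recast as a first-order linear delta dynamic equation, which Theorem~\ref{thm:IVP} solves in closed form. Substituting the resulting expression into the general Lagrangian \eqref{eq:genF:lag} will produce \eqref{eq4}.

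The main step is to set $f(t) := R(t,0,0)$, use Theorem~\ref{differentiation} to rewrite $R(\sigma(t),0,0) = f^{\sigma}(t) = f(t)+\mu(t)f^{\Delta}(t)$ in \eqref{eq3}, and then divide through by the (nowhere zero on an isolated time scale) factor $\mu^{2}(t)(\mu^{\sigma}(t))^{\dag}$ to obtain $f^{\Delta}(t) = r(t)f(t) + s(t)$ with $r$ and $s$ exactly as in \eqref{funct s, r}. A short computation gives $1+\mu(t)r(t) = -\mu(\sigma(t))/\mu(t) \neq 0$, so $r \in \mathcal{R}$ and Theorem~\ref{thm:IVP} applies with an arbitrary initial value $R_{0} := f(a) \in \mathbb{R}$, yielding
\begin{equation*}
R(t,0,0) = e_{r}(t,a)R_{0} + \int\limits_{a}^{t} e_{r}(t,\sigma(\tau))\,s(\tau)\,\Delta\tau.
\end{equation*}

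Finally, I substitute this formula into the coefficient of $y^{\Delta}(t)^{2}/2$ in \eqref{eq:genF:lag}; by \eqref{eq3} itself that coefficient equals $R(t,0,0) + w(t,y^{\sigma}(t),y^{\Delta}(t)) - w(t,0,0)$, so the announced form \eqref{eq4} is immediate from Theorem~\ref{theorem1}. The only genuine obstacle is the sign-and-$\dag$ bookkeeping when rearranging \eqref{eq3}: the essential minus sign in $r(t)$ arises from transferring $(1+\mu(t)(\mu^{\sigma}(t))^{\dag})f(t)$ across the equation, and one must verify that regressivity of $r$ is preserved so that Theorem~\ref{thm:IVP} genuinely delivers a unique solution for $R(t,0,0)$.
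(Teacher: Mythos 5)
Your proposal is correct and follows essentially the same route as the paper's own proof: rewrite $R(\sigma(t),0,0)$ via $f^{\sigma}=f+\mu f^{\Delta}$, divide \eqref{eq3} by $\mu^{2}(t)(\mu^{\sigma}(t))^{\dag}$ to get the linear dynamic equation $R^{\Delta}(t,0,0)=r(t)R(t,0,0)+s(t)$, verify $1+\mu(t)r(t)=-\mu^{\sigma}(t)/\mu(t)\neq 0$ so that Theorem~\ref{thm:IVP} applies, and substitute the resulting solution into \eqref{eq:genF:lag}. The only difference is that you make explicit the final substitution step (that by \eqref{eq3} the coefficient of $y^{\Delta}(t)^{2}/2$ reduces to $R(t,0,0)+w-w(t,0,0)$), which the paper leaves implicit.
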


\begin{proof}
In the case of an isolated time scale $\mathbb{T}$,
we may obtain the form of function $Q$ in the same
way as it was done in the proof of Theorem~\ref{theorem1}.
We derive a general form for $R$ from Legendre's condition.
By relation $f^{\sigma}=f+\mu f^{\Delta}$
(Theorem~\ref{differentiation}), one may write equation \eqref{eq3} as
\begin{multline*}
R(t,0,0)+\mu(t)(\mu^{\sigma}(t))^{\dag}\left(R(t,0,0)+\mu(t) R^{\Delta}(t,0,0)\right)\\
+\mu(t) \left\lbrace 2Q_{x}(t,0)+\mu(t) P_{xx}(t,0)\right\rbrace - p(t)=0.
\end{multline*}
Hence,
\begin{multline}
\label{eq1}
\mu^{2}(t)(\mu^{\sigma}(t))^{\dag}R^{\Delta}(t,0,0)
+\left[1+\mu(t)(\mu^{\sigma}(t))^{\dag}\right]R(t,0,0)\\
+\mu(t)[2Q_{x}(t,0)+\mu(t) P_{xx}(t,0)]-p(t)=0.
\end{multline}
For an isolated time scale $\mathbb{T}$, equation \eqref{eq1}
is a first order delta dynamic equation of the following form:
$$
R^{\Delta}(t,0,0)+\frac{1+\mu(t)(\mu^{\sigma}(t))^{\dag}}{\mu^{2}(t)(\mu^{\sigma}(t))^{\dag}}R(t,0,0)
+\frac{\mu(t)[2Q_{x}(t,0)+\mu(t)P_{xx}(t,0)]-p(t)}{\mu^{2}(t)(\mu^{\sigma}(t))^{\dag}}=0.
$$
With notation \eqref{funct s, r} we have
\begin{equation}
\label{eq2}
R^{\Delta}(t,0,0)=r(t)R(t,0,0)+s(t).
\end{equation}
Observe that $r(t)$ is regressive. Indeed, if $\mu(t)\neq 0$, then
$$
1+\mu(t)r(t)=1-\frac{1+\mu(t)(\mu^{\sigma}(t))^{\dag}}{\mu(t)(\mu^{\sigma}(t))^{\dag}}
=1-\frac{\mu^{\sigma}(t)+\mu(t)}{\mu(t)}
=-\frac{\mu^{\sigma}(t)}{\mu(t)}\neq 0
$$
for all $t\in [a,b]^{\kappa}$.
Therefore, by Theorem~\ref{thm:IVP},
there is a unique solution to equation \eqref{eq2} with initial condition
$R(a,0,0)=R_{0}\in\mathbb{R}$:
\begin{equation}
\label{eq5}
R(t,0,0)=e_{r}(t,a)R_{0}+\int\limits_{a}^{t}e_{r}(t,\sigma(\tau))s(\tau)\Delta\tau .
\end{equation}
Thus, a general form of the integrand $L$ for functional \eqref{functional}
is given by \eqref{eq4}.
\end{proof}

\begin{remark}
\label{rem4}
Instead of \eqref{eq5}, we can use an alternative form
for the solution of the initial value problem
\eqref{eq2} subject to $R(a,0,0)=R_{0}$
(cf. Remark~\ref{rem3}):
\begin{equation*}
R(t,0,0)=e_{r}(t,a)\left[ R_{0}
+\int\limits_{a}^{t}e_{r}(a,\sigma(\tau))s(\tau)\Delta\tau\right].
\end{equation*}
Then the Lagrangian $L$ \eqref{eq4} can be written as
\begin{equation*}
\begin{split}
&L\left(t,y^{\sigma}(t),y^{\Delta}(t)\right)
= P\left(t,y^{\sigma}(t)\right)\\
&+\left(C+\int\limits_{a}^{t}P_{x}(\tau,0)\Delta \tau
+q(t,y^{\sigma}(t))-q(t,0)\right)y^{\Delta}(t)\\
&+\left(e_{r}(t,a)\left[ R_{0}
+\int\limits_{a}^{t}e_{r}(a,\sigma(\tau))s(\tau)\Delta\tau\right]
+w(t,y^{\sigma}(t),y^{\Delta}(t))
-w(t,0,0)\right)\frac{y^{\Delta}(t)^{2}}{2}.
\end{split}
\end{equation*}
\end{remark}

Based on Corollary~\ref{cor1}, we present the form of Lagrangian $L$
in the periodic time scale $\mathbb{T}=h\mathbb{Z}$.

\begin{example}
\label{cor hZ}
Let $\mathbb{T}=h\mathbb{Z}$, $h > 0$,
and $a, b\in h\mathbb{Z}$ with $a<b$.
Then $\mu(t) \equiv h$.
We consider the variational functional
\begin{equation}
\label{functional hZ}
\mathcal{L}(y)=h\sum_{k=\frac{a}{h}}^{\frac{b}{h}-1}
L\left(kh,y(kh+h),\Delta_h y(kh)\right)
\end{equation}
subject to the boundary conditions $y(a)=y(b)=0$, which
attains a local minimum at $\hat{y}(kh)\equiv 0$
under the strengthened Legendre condition
$$
R(kh,0,0)+2hQ_{x}(kh,0)+h^{2}P_{xx}(kh,0)+R(kh+h,0,0)>0,
$$
$kh\in [a,b-2h]\cap h\mathbb{Z}$.
Functions $r(t)$ and $s(t)$  (see \eqref{funct s, r})
have the following form:
\begin{equation*}
r(t)=\frac{-2}{h}\in\mathcal{R},
\quad s(t)=\frac{p(t)}{h}
-\left(2Q_{x}(t,0) + h P_{xx}(t,0)\right).
\end{equation*}
Hence,
$$
\int\limits_{a}^{t}P_{x}(\tau,0)\Delta \tau
=h\sum\limits_{i=\frac{a}{h}}^{\frac{t}{h}-1}P_{x}(ih,0),
$$
\begin{equation*}
\int\limits_{a}^{t}e_{r}(t,\sigma(\tau))s(\tau)\Delta \tau
=\sum_{i=\frac{a}{h}}^{\frac{t}{h}-1}(-1)^{\frac{t}{h}-i-1}
\left(p(ih)-2hQ_{x}(ih,0)-h^{2}P_{xx}(ih,0)\right).
\end{equation*}
Therefore, the Lagrangian $L$ of the variational functional \eqref{functional hZ}
on $\mathbb{T}=h\mathbb{Z}$ has the form
\begin{equation*}
\begin{split}
L&\left(kh,y(kh+h),\Delta_h y(kh)\right)=P\left(kh,y(kh+h)\right)\\
&+\left(C+\sum\limits_{i=\frac{a}{h}}^{k-1}hP_{x}(ih,0)
+q(kh,y(kh+h))-q(kh,0)\right)\Delta_h y(kh)\\
&+\frac{1}{2}\Biggl((-1)^{k-\frac{a}{h}}R_{0}
+\sum_{i=\frac{a}{h}}^{k-1}(-1)^{k-i-1}
\left(p(i h)-2hQ_{x}(ih,0)-h^{2}P_{xx}(ih,0)\right)\\
&+w(kh,y(kh+h),\Delta_h y(kh))-w(kh,0,0)\Biggr)
\left(\Delta_h y(kh)\right)^{2},
\end{split}
\end{equation*}
where functions $P$, $p$, $q$, $w$ are arbitrary but satisfy assumptions of Theorem~\ref{theorem1}.
\end{example}

Now we consider the $q$-scale $\mathbb{T}=q^{\mathbb{N}_{0}}$, $q > 1$.
In order to present the form of Lagrangian $L$, we use Remark~\ref{rem4}.

\begin{example}
\label{ex1}
Let $\mathbb{T}=q^{\mathbb{N}_{0}}=\lbrace q^{k}: q>1, k\in\mathbb{N}_{0}\rbrace$
and $a, b \in \mathbb{T}$ with $a < b$. We consider the variational functional
\begin{equation}
\label{eq6}
\mathcal{L}(y)=(q-1) \sum_{t \in [a,b)} t L\left(t,y(qt),\Delta_{q}y(t)\right)
\end{equation}
subject to the boundary conditions $y(a)=y(b)=0$, which attains a local minimum
at $\hat{y}(t) \equiv 0$ under the strengthened Legendre condition
\begin{equation*}
R(t,0,0)+(q-1)t\lbrace 2Q_{x}(t,0)+(q-1)t P_{xx}(t,0)\rbrace+\frac{1}{q}R(qt,0,0)>0
\end{equation*}
at the null extremal, $t\in \left[a,\frac{b}{q^{2}}\right] \cap q^{\mathbb{N}_{0}}$.
Functions given by \eqref{funct s, r} may be written as
$$
r(t)=\frac{q+1}{t(1-q)},\quad s(t)=\frac{qp(t)}{t(q-1)}-2qQ_{x}(t,0)-q(q-1)tP_{xx}(t,0).
$$
Hence,
$$
\int\limits_{a}^{t}P_{x}(\tau,0)\Delta\tau
=(q-1) \sum_{\tau \in [a,t)} \tau P_{x}(\tau,0),
\quad
e_{r}(t,a)=\prod_{s\in[a,t)} (-q),
$$
\begin{multline*}
\int\limits_{a}^{t}e_{r}(a,\sigma(\tau))s(\tau)\Delta \tau\\
=\sum_{\tau \in [a,t)} \frac{(1-q) \tau}{q \prod\limits_{s\in [a,\tau)} (-q)}
\left[\frac{q p(\tau)}{\tau(q-1)}-2q Q_{x}(\tau,0)-q(q-1)\tau P_{xx}(\tau,0)\right].
\end{multline*}
Therefore, the Lagrangian $L$ of the variational functional
\eqref{eq6} has the form
\begin{multline*}
L(t,y(qt),\Delta_{q}y(t))=P(t, y(qt))\\
+\left(
C+(q-1)\sum_{\tau \in [a,t)} \tau P_{x}(\tau,0)+ q(t,y(qt))-q(t,0)
\right)\Delta_{q}y(t) +\Bigg\lbrace
\prod_{s\in[a,t)}\left(-q\right)\\
\times \left[
R_{0}+ \sum_{\tau \in [a,t)}
\frac{(1-q)\tau}{q \prod\limits_{s\in [a,\tau)} (-q)}
\left(\frac{q p(\tau)}{\tau(q-1)}-2q Q_{x}(\tau,0)-q(q-1)\tau P_{xx}(\tau,0)
\right)\right]\\
+w\left(t,y(qt),\Delta_{q}y(t)\right)-w(t,0,0)
\Bigg\rbrace \frac{(\Delta_{q} y(t))^{2}}{2},
\end{multline*}
where functions $P$, $p$, $r$, $w$ are arbitrary but satisfy assumptions of Theorem~\ref{theorem1}.
\end{example}


\section*{Acknowledgments}

This article was supported by Portuguese funds through the
\emph{Center for Research and Development in Mathematics and Applications} (CIDMA),
and \emph{The Portuguese Foundation for Science and Technology} (FCT),
within project PEst-OE/MAT/UI4106/2014. Dryl was also supported
by FCT through the Ph.D. fellowship SFRH/BD/51163/2010;
Malinowska by Bialystok University of Technology grant S/WI/02/2011;
and Torres by the FCT project PTDC/EEI-AUT/1450/2012,
co-financed by FEDER under POFC-QREN with
COMPETE ref. FCOMP-01-0124-FEDER-028894.
The authors are very grateful to two anonymous referees
for valuable remarks and comments, which
significantly contributed to the quality of the paper.



\end{document}